\documentclass[12pt,reqno]{article}
\usepackage{amsfonts}
\usepackage{amssymb,amscd,amsmath,amsthm,color}\usepackage{hyperref}
\textheight  23 true cm \textwidth  15.7 true cm \hoffset -1.1cm \voffset -1.9cm
\parindent=16pt

\newtheorem{theorem}{Theorem}
\newtheorem{lemma}[theorem]{Lemma}

\usepackage{enumerate,amssymb}

\newtheorem{conjecture}[theorem]{Conjecture}
\theoremstyle{remark}
\newtheorem{remark}[theorem]{Remark}

\numberwithin{equation}{section}

\newcommand{\tr}{\operatorname{tr}}

\begin{document}
\baselineskip=15pt

\title{An upper bound for the determinant of a diagonally balanced symmetric matrix}

\author{Minghua Lin}

\date{}

\maketitle

\begin{abstract}
   We prove a conjectured determinantal inequality: \begin{eqnarray*} \frac{\det J}{\prod_{i=1}^nJ_{ii}}\le 2\left(1-\frac{1}{n-1}\right)^{n-1},
\end{eqnarray*} where $J$ is a real $n\times n$ ($n\ge 2$) diagonally balanced symmetric matrix.

\end{abstract}

{\small\noindent
Keywords: determinant,  diagonally balanced matrix.

\noindent
AMS subjects classification 2010:  15A45.}

 \section{Introduction}
\noindent
  An $n\times n$ real matrix $J$ is (row) diagonally dominant if
\[\Delta_i(J) := |J_{ii}|-\sum_{j\ne i}|J_{ij}|\ge0, ~~\hbox{for}~~ i = 1,\ldots, n.\]
When $\Delta_i(J)=0$ for all $i$, we call such a matrix diagonally balanced.

In \cite{HLW12}, the following conjecture is made.

\begin{conjecture}
For a (entrywise) positive, diagonally balanced symmetric $J$, we have the bound:
\begin{eqnarray}\label{con}\frac{\det J}{\prod_{i=1}^nJ_{ii}}\le 2\left(1-\frac{1}{n-1}\right)^{n-1}\rightarrow\frac{2}{e}
\end{eqnarray}
\end{conjecture}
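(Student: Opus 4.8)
The plan is to reduce the inequality to a statement about the eigenvalues of a normalized version of $J$ and then close with the arithmetic–geometric mean inequality. Write $D=\di(J_{11},\ldots,J_{nn})$ and $\tilde J=D^{-1/2}JD^{-1/2}$; since $J$ is entrywise positive we have $J_{ii}>0$, so $D$ is invertible, $\tilde J$ is symmetric with every diagonal entry equal to $1$, and
\[
\frac{\det J}{\prod_{i=1}^n J_{ii}}=\det\!\big(D^{-1/2}JD^{-1/2}\big)=\det\tilde J .
\]
The whole problem is thus to bound $\det\tilde J$, which I will do through its spectrum $\nu_1,\ldots,\nu_n$.

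Three facts about this spectrum drive the argument, and assembling them is the heart of the proof. First, a symmetric diagonally dominant matrix with nonnegative diagonal is positive semidefinite (each Gershgorin disc is centered at $J_{ii}\ge0$ with radius $\sum_{j\ne i}|J_{ij}|\le J_{ii}$, hence lies in the closed right half-plane), and diagonal balance is exactly the boundary case of dominance; hence $J\succeq0$, so $\tilde J\succeq0$ and every $\nu_i\ge0$. Second, $\tr\tilde J=n$ because the diagonal of $\tilde J$ is all ones, so $\sum_{i=1}^n\nu_i=n$. Third — and this is the step that uses balance together with positivity — the row sums satisfy $(J\mathbf 1)_i=J_{ii}+\sum_{j\ne i}J_{ij}=2J_{ii}$, i.e. $J\mathbf 1=2D\mathbf 1$, which gives $\tilde J(D^{1/2}\mathbf 1)=D^{-1/2}J\mathbf 1=2\,D^{1/2}\mathbf 1$. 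Thus $2$ is an eigenvalue of $\tilde J$; relabel so that $\nu_1=2$.

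With these in hand the conclusion is immediate. Factoring out the forced eigenvalue, $\det\tilde J=2\prod_{i=2}^n\nu_i$, where the remaining $n-1$ numbers satisfy $\nu_i\ge0$ and $\sum_{i=2}^n\nu_i=n-2$. Applying AM–GM to these $n-1$ nonnegative reals,
\[
\prod_{i=2}^n\nu_i\le\left(\frac{1}{n-1}\sum_{i=2}^n\nu_i\right)^{n-1}=\left(\frac{n-2}{n-1}\right)^{n-1}=\left(1-\frac{1}{n-1}\right)^{n-1},
\]
and multiplying by $2$ yields exactly the bound \eqref{con}.

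I do not expect a serious obstacle in the positive case: once one notices that $J$ is positive semidefinite and that positivity plus balance forces $D^{1/2}\mathbf 1$ to be an eigenvector with eigenvalue $2$, the trace constraint pins down the sum of the remaining eigenvalues and AM–GM finishes. The point that needs care is the bookkeeping leaving exactly $n-1$ free eigenvalues summing to $n-2$, which is what produces the exponent $n-1$ and the base $1-\tfrac{1}{n-1}$; the case $n=2$ is a degenerate but consistent check, since then the bound is $0$ and indeed $\det\tilde J=2\nu_2=0$. If one instead wanted the more general diagonally balanced (not necessarily positive) statement of the abstract, the extra work would be to control the signs of the off-diagonal entries, since then $J\mathbf 1=2D\mathbf 1$ can fail and the eigenvalue $2$ need not appear; a diagonal signature conjugation reducing to the positive case is the natural route there.
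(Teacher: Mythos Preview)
Your argument is correct and takes a genuinely different, more elementary route than the paper's. The paper reduces to $J=I_n+B$ with $B$ symmetric stochastic and zero diagonal, then invokes a determinantal inequality of Borwein--Styan--Wolkowicz bounding $\det A$ for positive semidefinite $A$ in terms of $\tr A$ and $\tr A^2$; combining this with the Cauchy--Schwarz estimate $\tr B^2\ge n/(n-1)$ and a monotonicity lemma for $s\mapsto(1+s\sqrt{n-1})(1-s/\sqrt{n-1})^{n-1}$ yields the bound. You instead observe directly that $J\mathbf 1=2D\mathbf 1$ forces $2$ to be an eigenvalue of $\tilde J=D^{-1/2}JD^{-1/2}$, so $\det\tilde J$ factors as $2$ times a product of $n-1$ nonnegative eigenvalues summing to $n-2$, and a single application of AM--GM finishes. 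Your proof is shorter, self-contained, and handles arbitrary positive diagonal entries without a separate normalization step. The paper's approach, by contrast, uses only $\tr A$, $\tr A^2$, and positive semidefiniteness, so (as the paper itself remarks) it extends without change to diagonally balanced symmetric matrices whose off-diagonal entries carry either sign, whereas your eigenvalue-$2$ step genuinely relies on positivity of the off-diagonals, as you correctly flag in your closing paragraph.
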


 Without loss of generality, we may assume $J_{ii}=1$ for all $i$. Then we can write $J=I_n+B$, where $B$ is a symmetric stochastic matrix with $B_{ii}=0$ for all $i$. Here $I_n$ means the identity matrix of size $n\times n$. A (row) stochastic matrix is a square matrix of nonnegative real numbers, with each row summing to $1$.  The trace of a square matrix $\cdot$ is denoted by $\tr \cdot$.

  The purpose of this short note is to give an affirmative answer to the conjecture.  The main result is the following theorem:
\begin{theorem}\label{thm1} Let $B$ be an $n\times n$ symmetric stochastic matrix with $B_{ii}=0$ for all $i$. Then
\begin{eqnarray}\label{main}\det (I_n+B)\le 2\left(1-\frac{1}{n-1}\right)^{n-1}.
\end{eqnarray}
The inequality is sharp.\end{theorem}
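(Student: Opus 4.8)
The plan is to pass from the matrix $B$ to its spectrum and reduce the determinantal bound to a constrained optimization of a product of real numbers. Since $B$ is real symmetric, its eigenvalues $\lambda_1,\dots,\lambda_n$ are all real, and I would record three facts about them. First, because every row of $B$ sums to $1$, the all-ones vector is an eigenvector with eigenvalue $1$, so one eigenvalue --- say $\lambda_1$ --- equals $1$. Second, by Gershgorin's theorem each eigenvalue lies in a disc centered at $B_{ii}=0$ of radius $\sum_{j\ne i}B_{ij}=1$, whence $-1\le\lambda_i\le 1$ for every $i$. Third, since $B$ has zero diagonal, $\sum_{i=1}^n\lambda_i=\tr B=0$, and therefore $\sum_{i=2}^n\lambda_i=-1$.

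With these in hand, I would diagonalize and write
\[
\det(I_n+B)=\prod_{i=1}^n(1+\lambda_i)=2\prod_{i=2}^n(1+\lambda_i),
\]
so the problem becomes: maximize $\prod_{i=2}^n(1+\lambda_i)$ over the $n-1$ real variables $\lambda_2,\dots,\lambda_n$ subject to $\lambda_i\ge -1$ and $\sum_{i=2}^n\lambda_i=-1$. Setting $\mu_i:=1+\lambda_i\ge 0$, the constraint becomes $\sum_{i=2}^n\mu_i=(n-1)-1=n-2$, and I want to maximize the product $\prod\mu_i$ of $n-1$ nonnegative numbers with this fixed sum. By the AM--GM inequality this product is at most $\left(\frac{n-2}{n-1}\right)^{n-1}=\left(1-\frac{1}{n-1}\right)^{n-1}$, which gives exactly the claimed bound after multiplying by the factor $2$.

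Finally I would verify sharpness by exhibiting an extremal matrix. Equality in AM--GM forces $\mu_i=\frac{n-2}{n-1}$, i.e.\ $\lambda_i=-\frac{1}{n-1}$ for $i\ge 2$; these are precisely the eigenvalues of $B=\frac{1}{n-1}(J_n-I_n)$, where $J_n$ is the all-ones matrix, and this $B$ is a genuine symmetric stochastic matrix with zero diagonal. Evaluating $\det(I_n+B)$ for it reproduces the right-hand side, so the bound is attained.

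I would flag two points as the places where care is needed rather than as serious obstacles. The essential input is the lower bound $\lambda_i\ge -1$: without it the factors $1+\lambda_i$ could be negative and AM--GM would not apply, so the Gershgorin (or Perron--Frobenius) step is what makes the reduction legitimate. The only genuinely conceptual point is that optimizing over admissible eigenvalue tuples is an a priori relaxation of optimizing over admissible matrices, which could in principle overshoot; but the explicit matrix above shows the relaxed optimum is actually realized, so the relaxation is lossless and the inequality is sharp.
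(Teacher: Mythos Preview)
Your argument is correct and is genuinely different from---and noticeably shorter than---the paper's. The paper does not work directly with the spectrum and AM--GM. Instead it invokes a known eigenvalue bound of Borwein--Styan--Wolkowicz: for a positive semidefinite $A$ with $m=\tr A/n$ and $s^2=\tr A^2/n-m^2$,
\[
\det A\le (m+s\sqrt{n-1})\,(m-s/\sqrt{n-1})^{\,n-1}.
\]
Applied to $A=I_n+B$ this gives $m=1$ and $s^2=\tr B^2/n$. The paper then proves two auxiliary lemmas: that $\tr B^2\ge n/(n-1)$ (via Cauchy--Schwarz on the off-diagonal entries), so $s\ge 1/\sqrt{n-1}$; and that $f(t)=(1+t\sqrt{n-1})(1-t/\sqrt{n-1})^{n-1}$ is decreasing, so $f(s)\le f(1/\sqrt{n-1})=2(1-1/(n-1))^{n-1}$. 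Your route replaces all of this machinery by three elementary facts---the Perron/Gershgorin bound $\lambda_i\ge -1$, the trace constraint $\sum\lambda_i=0$, and AM--GM on the nonnegative numbers $1+\lambda_i$---and is entirely self-contained. The cited bound the paper uses is itself proved by a constrained optimization on the spectrum, so in effect you are cutting out the middleman; the paper's approach is more ``off the shelf'' but gains nothing in scope here. Your discussion of why the relaxation to eigenvalue tuples is lossless, with the explicit extremizer $B=\tfrac{1}{n-1}(J_n-I_n)$, matches the paper's sharpness verification.
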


When $n=2$, (\ref{main}) is trivial, so in the sequel, we assume $n\ge 3$. The proof of Theorem \ref{thm1} is given in the next section.

 \section{Auxiliary results and the proofs}
\noindent
We start with some lemmas that are needed in the proof of our main result.

\begin{lemma}  \label{lem1} Let $B$ be an $n\times n$ symmetric stochastic matrix with $B_{ii}=0$ for all $i$. Then
\begin{eqnarray}\label{e21} \tr B^2\ge \frac{n}{n-1}. \end{eqnarray} The equality holds if and only if $B_{ij}=\frac{1}{n-1}$ for all $i\ne j$. \end{lemma}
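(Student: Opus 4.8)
The plan is to use the symmetry of $B$ to convert $\tr B^2$ into a sum of squared entries, and then to minimize that sum row by row via the Cauchy--Schwarz inequality. Since $B=B^{\top}$, we have $B_{ji}=B_{ij}$, so
\[
\tr B^2 = \sum_{i=1}^n\sum_{j=1}^n B_{ij}B_{ji} = \sum_{i=1}^n\sum_{j=1}^n B_{ij}^2 .
\]
The hypothesis $B_{ii}=0$ means the diagonal contributes nothing, so effectively the inner sum runs over the $n-1$ off-diagonal positions in row $i$.

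Next I would fix a row index $i$ and exploit stochasticity: the $n-1$ numbers $B_{ij}$ with $j\ne i$ are nonnegative and sum to $1$. Applying Cauchy--Schwarz to these entries against the all-ones vector of length $n-1$ gives
\[
1 = \Big(\sum_{j\ne i}B_{ij}\Big)^2 \le (n-1)\sum_{j\ne i}B_{ij}^2 ,
\]
hence $\sum_{j\ne i}B_{ij}^2 \ge \tfrac{1}{n-1}$ for each $i$. Summing these $n$ row estimates produces
\[
\tr B^2 = \sum_{i=1}^n\sum_{j\ne i}B_{ij}^2 \ \ge\ \frac{n}{n-1},
\]
which is exactly \eqref{e21}. (Equivalently, one could phrase the per-row step as convexity of $x\mapsto x^2$, but the Cauchy--Schwarz formulation makes the equality analysis cleanest.)

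For the equality characterization I would invoke the equality condition in Cauchy--Schwarz: for a fixed $i$, equality in $1\le (n-1)\sum_{j\ne i}B_{ij}^2$ holds precisely when $B_{ij}$ is independent of $j$, forcing $B_{ij}=\tfrac{1}{n-1}$ for all $j\ne i$. Equality in the summed bound then requires this in every row simultaneously, which is the stated condition $B_{ij}=\tfrac{1}{n-1}$ for all $i\ne j$; one checks directly that this matrix is symmetric, stochastic, and has zero diagonal, so the bound is attained. I do not anticipate a genuine obstacle here: the only points needing care are the bookkeeping that $B_{ii}=0$ eliminates the diagonal terms, and the observation that the separate per-row equality conditions are jointly realized by the single candidate matrix.
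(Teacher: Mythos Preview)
Your argument is correct and follows essentially the same route as the paper: compute $\tr B^2=\sum_{i\ne j}B_{ij}^2$ via symmetry and zero diagonal, then bound the sum of squares using Cauchy--Schwarz against the all-ones vector, with the equality case forcing $B_{ij}=\tfrac{1}{n-1}$. The only cosmetic difference is that you apply Cauchy--Schwarz row by row and sum, whereas the paper applies it once to all $n(n-1)$ off-diagonal entries simultaneously (using $\sum_{i\ne j}B_{ij}=n$); the resulting bound and equality condition are identical.
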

\begin{proof} By the Cauchy-Schwarz inequality, \begin{eqnarray*} (n^2-n)\sum_{i\ne j}B_{ij}^2\ge \left(\sum_{i\ne j}B_{ij}\right)^2 =n^2,
\end{eqnarray*} so \[\tr B^2=\sum_{i\ne j}B_{ij}^2\ge\frac{n}{n-1}.\]
The equality case is trivial.
 \end{proof}

\begin{lemma} \label{lem2} Given $a>0$. Define $f(t)=(1+at)(1-t/a)^{a^2}$, $0\le t\le a$. Then $f(t)$ is decreasing.\end{lemma}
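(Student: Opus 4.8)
The plan is to show $f'(t)\le 0$ throughout the interval by a direct derivative computation, since the product-and-chain-rule calculation factors very cleanly. First I would differentiate $f(t)=(1+at)(1-t/a)^{a^2}$, obtaining after collecting the common factor $(1-t/a)^{a^2-1}$ the expression
\[
f'(t)=a(1-t/a)^{a^2-1}\bigl[(1-t/a)-(1+at)\bigr]=-a\,t\,(a+1/a)\,(1-t/a)^{a^2-1}.
\]
On $[0,a)$ every factor has a fixed sign: $a>0$, $t\ge 0$, $a+1/a>0$, and $(1-t/a)^{a^2-1}>0$ because the base $1-t/a$ lies in $(0,1]$ regardless of the sign of the exponent. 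Hence $f'(t)\le 0$, with strict inequality for $t\in(0,a)$, so $f$ is (strictly) decreasing on $[0,a]$.

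A cleaner alternative that sidesteps the exponent entirely is logarithmic differentiation. Since $f(t)>0$ for $0\le t<a$, it suffices to control $(\log f)'$. Writing $\log f(t)=\log(1+at)+a^2\log(1-t/a)$ and differentiating gives
\[
\frac{f'(t)}{f(t)}=\frac{a}{1+at}-\frac{a}{1-t/a}=a\left(\frac{1}{1+at}-\frac{1}{1-t/a}\right).
\]
For $t>0$ one has $1+at>1>1-t/a>0$, so the first reciprocal is strictly smaller than the second, the parenthesis is negative, and therefore $f'(t)<0$. This reproduces the monotonicity without having to track whether $a^2-1$ is positive or negative.

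I expect the only point requiring genuine care is the endpoint $t=a$. When $0<a<1$ the exponent $a^2-1$ is negative, so $(1-t/a)^{a^2-1}\to+\infty$ and $f'(t)\to-\infty$ as $t\to a^-$; nevertheless the derivative retains its (negative) sign on $(0,a)$, and $f$ extends continuously to $t=a$ with $f(a)=0$, so the conclusion that $f$ is decreasing on the closed interval $[0,a]$ stands. This boundary bookkeeping is the only subtlety; the remainder is a routine sign analysis.
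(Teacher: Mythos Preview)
Your proposal is correct, and your second (logarithmic) approach is exactly the paper's proof: the author computes $(\log f)'(t)=\frac{a}{1+at}-\frac{a}{1-t/a}=-\frac{a(1+a^2)t}{(1+at)(a-t)}<0$ on $(0,a)$. Your first approach is just the same computation without dividing through by $f$, and your endpoint discussion at $t=a$ is a welcome bit of care that the paper omits.
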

\begin{proof}  It suffices to show $\widetilde{f}(t)=\log f(t)$ is decreasing for $0< t<a$. Observing \[\widetilde{f}'(t)=\frac{a}{1+at}-\frac{a}{1-t/a}=-\frac{a(1+a^2)t}{(1+at)(a-t)}<0.\] The conclusion follows.
 \end{proof}

The key to the proof of Theorem \ref{thm1} is the following lemma.

\begin{lemma} (\cite{BSW82} or  \cite[Eq. (1.2)]{GJSW84}) \label{lem3} Let $A$ be an $n\times n$ positive semidefinite matrix.  If $m=\frac{\tr A}{n}$ and $s=\sqrt{\frac{\tr A^2}{n}- m^2}$, then
\begin{eqnarray}\label{e21}
(m - s\sqrt{n-1})(m + s/\sqrt{n-1})^{n-1}\le \det A\nonumber\\
\le (m + s\sqrt{n-1})(m - s/\sqrt{n-1})^{n-1}. \end{eqnarray} \end{lemma}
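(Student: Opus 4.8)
The plan is to read the two inequalities as the extreme values of a constrained optimization over the eigenvalues of $A$. Writing $\lambda_1\ge\cdots\ge\lambda_n\ge 0$ for the eigenvalues, one has $\det A=\prod_i\lambda_i$, $\sum_i\lambda_i=nm$, and $\sum_i\lambda_i^2=n(m^2+s^2)$, the last because $ns^2=\tr A^2-nm^2=\sum_i(\lambda_i-m)^2$. Thus it suffices to find the maximum and the minimum of $\prod_i\lambda_i$ subject to the two moment constraints $\sum_i\lambda_i=nm$ and $\sum_i\lambda_i^2=n(m^2+s^2)$ together with $\lambda_i\ge 0$. The feasible set is the intersection of a hyperplane and a sphere inside the nonnegative orthant, hence compact, so both extrema are attained.

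First I would show that at any extremizer the $\lambda_i$ assume at most two distinct positive values. This is a smoothing argument: if three coordinates $\lambda_i,\lambda_j,\lambda_k$ were pairwise distinct and positive, then fixing $\lambda_i+\lambda_j+\lambda_k$ and $\lambda_i^2+\lambda_j^2+\lambda_k^2$ fixes the first two elementary symmetric functions of the triple but leaves the third, $\lambda_i\lambda_j\lambda_k$, free to vary (the three numbers being roots of a cubic whose constant term may still move while the roots stay real, positive and distinct). Hence the total product $\prod_i\lambda_i$ could be strictly increased or decreased, contradicting extremality. Equivalently, the Lagrange conditions $1/\lambda_i=\alpha+\beta\lambda_i$ force the positive $\lambda_i$ to be roots of one fixed quadratic.

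Next I would parametrize the two-value configurations: suppose the larger value $a$ occurs with multiplicity $p$ and the smaller value $b$ with multiplicity $q=n-p$. Solving the two moment equations gives $a=m+s\sqrt{q/p}$ and $b=m-s\sqrt{p/q}$, so the product becomes a function of the single integer parameter $p\in\{1,\dots,n-1\}$, namely $P(p)=\bigl(m+s\sqrt{q/p}\bigr)^{p}\bigl(m-s\sqrt{p/q}\bigr)^{q}$. Observe that $P(1)=(m+s\sqrt{n-1})(m-s/\sqrt{n-1})^{n-1}$ is exactly the claimed upper bound and $P(n-1)=(m+s/\sqrt{n-1})^{n-1}(m-s\sqrt{n-1})$ is exactly the claimed lower bound, so everything reduces to showing that $P$ is maximized at $p=1$ and minimized at $p=n-1$.

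The main work, and the main obstacle, is to establish this monotonicity of $P$: I would extend $p$ to a continuous variable on $[1,n-1]$ and show $\frac{d}{dp}\log P(p)<0$ by direct differentiation, so that $\log P$ decreases from $p=1$ to $p=n-1$. A secondary point requiring care is the feasibility $b\ge 0$ of the extreme configurations. For the maximizer one needs $m\ge s/\sqrt{n-1}$, which always holds for positive semidefinite $A$, since $\bigl(\sum_i\lambda_i\bigr)^2\ge\sum_i\lambda_i^2$ rearranges to exactly this; hence the upper bound is always attained by a genuine positive semidefinite spectrum. For the minimizer the configuration requires $m\ge s\sqrt{n-1}$; when this fails the proposed lower bound $(m-s\sqrt{n-1})(m+s/\sqrt{n-1})^{n-1}$ is negative while $\det A\ge 0$, so the inequality is trivial, and when it holds the two-value spectrum is admissible and attains equality. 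Combining the monotonicity of $P$ with these feasibility checks yields both inequalities.
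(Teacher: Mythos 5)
The paper itself does not prove this lemma---it is quoted from \cite{BSW82} and \cite[Eq.~(1.2)]{GJSW84}---so your argument cannot be measured against an in-paper proof; it has to stand on its own. Structurally it does: passing to eigenvalues, using compactness of the feasible set, showing that any extremizer with all $\lambda_i>0$ takes at most two distinct values (both your smoothing argument and the Lagrange condition $1/\lambda_i=\alpha+\beta\lambda_i$ are valid; the two constraint gradients are dependent only when all $\lambda_i$ coincide, which forces $s=0$, a trivial case), and parametrizing the two-value spectra by $P(p)$, whose endpoints $P(1)$ and $P(n-1)$ are exactly the claimed upper and lower bounds. Your feasibility remarks are also right: $m\ge s/\sqrt{n-1}$ always holds for positive semidefinite $A$, and the lower bound is vacuous when $m<s\sqrt{n-1}$.

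There are, however, two gaps. The one you flagged yourself---monotonicity of $P$---is genuine: neither inequality follows without it, and ``by direct differentiation'' cannot be left as a promise for the step you call the main obstacle. Fortunately the computation closes cleanly and should be inserted: with $a=m+s\sqrt{q/p}$, $b=m-s\sqrt{p/q}$, $q=n-p$, one finds
\[
\frac{d}{dp}\log P(p)=\log\frac{a}{b}-\frac{sn}{2\sqrt{pq}}\left(\frac{1}{a}+\frac{1}{b}\right)
=\log\frac{a}{b}-\frac{a^{2}-b^{2}}{2ab},
\]
where the second equality uses $a-b=sn/\sqrt{pq}$; since $\log x\le\frac{1}{2}\left(x-\frac{1}{x}\right)$ for $x\ge 1$ (the difference vanishes at $x=1$ and has derivative $-(x-1)^{2}/(2x^{2})\le 0$), the right-hand side is $\le 0$. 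Note the differentiation is only legitimate on the interval of $p$ where $b\ge 0$; that interval always contains $p=1$ (so the upper-bound argument works), and it is all of $[1,n-1]$ precisely in the nontrivial case $m\ge s\sqrt{n-1}$ of the lower bound, which is all you need. The second gap is smaller and you did not flag it: for the lower bound when $m>s\sqrt{n-1}$, you must also rule out a minimizer with a zero eigenvalue, whose product $0<P(n-1)$ would defeat the argument. This follows from Cauchy--Schwarz: if a feasible spectrum has $\lambda_n=0$, then $(nm)^{2}\le (n-1)\,n(m^{2}+s^{2})$, i.e.\ $m\le s\sqrt{n-1}$, so no such point exists in that case. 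With these two insertions your proof is complete and independent of the cited references.
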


  {\it Proof of Theorem \ref{thm1}.}  Let $A=I_n+B$, then $A$ is positive semidefinite. Simple calculation gives $m:=\frac{\tr A}{n}=1$ and  $s^2:=\frac{\tr A^2}{n}- m^2=\frac{\tr B^2}{n}$. So by Lemma \ref{lem3}, we have
\begin{eqnarray}\label{e22} \det(I_n+B)\le  (1 + s\sqrt{n-1})(1- s/\sqrt{n-1})^{n-1},
\end{eqnarray} where $s=\sqrt{\frac{\tr B^2}{n}}$.
Note that $\tr B^2=\sum_{i\ne j}B_{ij}^2<n^2-n$ for $n\ge 3$, so $s<\sqrt{n-1}$. On the other hand, by  Lemma \ref{lem1}, we have $\frac{\tr B^2}{n}\ge \frac{1}{n-1}$, so $s\ge \frac{1}{\sqrt{n-1}}$. By Lemma \ref{lem2}, we know $f(s)=(1+s\sqrt{n-1})(1-s/\sqrt{n-1})^{n-1}$ is decreasing with respect to $s\in [\frac{1}{\sqrt{n-1}}, \sqrt{n-1})$. Thus \begin{eqnarray}\label{e23} f(s)\le f\left(\frac{1}{\sqrt{n-1}}\right)=2\left(1-\frac{1}{n-1}\right)^{n-1}.\end{eqnarray} Inequality (\ref{main}) now follows from  (\ref{e22}) and (\ref{e23}).

Taking  $B_{ij}=\frac{1}{n-1}$ for all $i\ne j$, the equality in (\ref{main}) holds. This proves the sharpness of (\ref{main}).

 \begin{remark} The lower bound of $\det A$ in (\ref{e21}) does not give a useful result for the lower bound for $\det(I_n+B)$ in Theorem \ref{thm1}. Indeed, define $g(s)=(1-s\sqrt{n-1})(1+s/\sqrt{n-1})^{n-1}$ for $s=\sqrt{\frac{\tr B^2}{n}}\ge \frac{1}{\sqrt{n-1}}$. In order $g(s)\ge 0$, we must require $s\le \frac{1}{\sqrt{n-1}}$. But now $g\left(\frac{1}{\sqrt{n-1}}\right)=0$.

  It is thus natural to ask whether there is a sharp lower bound $\Psi(n)$, depending on $n$ only, for $\det(I_n+B)$. Obviously, $\Psi(2)=0$, $\Psi(3)=\frac{1}{2}$.  \end{remark}

 \begin{remark} In the proof of Theorem \ref{thm1}, we do not require that the entries of $B$ to be positive. Thus Theorem \ref{thm1} is also valid for diagonally balance symmetric matrix $I_n+B$ with the entries of $B$ negative.\end{remark}

\vskip 10pt

\noindent
Minghua Lin

 Department of Applied Mathematics,

University of Waterloo,

 Waterloo, ON, N2L 3G1, Canada.

mlin87@ymail.com

\end{document}